\def\CT{\mathop{\mathrm{CT}}}
\newtheorem{thm}{Theorem}[section]
 \newtheorem{prop}{Proposition}[section]
\begin{document}

\noindent
\title[MacMahon Partition Analysis and the Poincar\'{e} series]{MacMahon Partition Analysis and the Poincar\'{e} series of the algebras of invariants of ternary,  quaternary and quinary forms }

\author{Leonid  Bedratyuk}
\address{ Khmelnytsky National University, Instituts'ka st. 11, Khmelnytsky, 29016, Ukraine}
\email{leonid.uk@gmail.com}

\author{Guoce Xin}
\address{Department of Mathematics, Capital Normal University, Beijing 100048, PR China}
\email{guoce.xin@gmail.com}
\begin{abstract}
By using MacMahon partition analysis technique, the Poincar\'e  series  for the algebras of  invariants of the  ternary, quaternary and quinary forms of small orders   are  calculated. 
\end{abstract}

\maketitle

\section{Introduction}
Let $V_{n,d}$ be the  vector $\mathbb{C}$-space of  $n$-ary forms of degree $d$ endowed with the natural action of the group  $SL_n.$ Consider the corresponding action of the group  (and the Lie algebra $\mathfrak{sl_{n}}$) on the algebra of polynomial functions   $\mathbb{C}[V_{n,d}]$   on the vector  space $V_{n,d}.$ Denote  by  $\mathcal{I}_{n,d}:=\mathbb{C}[V_{n,d}]^{\,SL_n}$ the  algebra of   $SL_n$-invariant polynomial functions.
In the language  of classical invariant theory   the algebra    $\mathcal{I}_{n,d}$ is called     the  algebra of invariants for   $n$-ary form of  degree $d.$ The algebra  $\mathcal{I}_{n,d}$ was a major object of research in the classical invariant theory
of the 19th century.  Nevertheless its  full description is  known only in some  particular   cases: $n=2,d\leq 10;$ $n=3,d\leq 4;$ $d=2$ for any $n.$

A possible approach to the study of the algebra $\mathcal{I}_{n,d}$ is  a description of its grading subspaces.
The algebra $\mathcal{I}_{n,d}$  is  a graded algebra
$$
\mathcal{I}_{n,d}=(\mathcal{I}_{n,d})_0 \oplus (\mathcal{I}_{n,d})_1 \oplus \cdots \oplus (\mathcal{I}_{n,d})_k \oplus  \cdots,
$$
here  $(\mathcal{I}_{n,d})_k$ is the vector subspace of   homogeneous  invariants of degree $k.$
The formal power series
$$
\mathcal{P}(\mathcal{I}_{n,d},t):=\sum_{k=0}^{\infty }\dim(\mathcal{I}_{n,d})_k t^k
$$
are  called the Poincar\'e series   of the algebra invariants $\mathcal{I}_{n,d}.$
 The Poincar\'e series $\mathcal{P}(A,t)$ of a graded algebra $A$  encodes important information about the algebra. For example,  its transcendence degree is equal to the pole order of $\mathcal{P}(A,t)$ at $t=1.$ Also, the knowledge of $\mathcal{P}(A,t)$ allows to get the upper bound for  the degree of elements  of  a  minimal generating set of the algebra $A.$ All known efficient algorithms for finding  minimal generating systems of the algebras of invariants  using the Poincar\'e series, see \cite{DerK}, \cite{Stur}.  The finitely generation of the algebra  $\mathcal{I}_{n,d}$  implies  that its  Poincar\'e series is   the power series expansions  of certain  rational function. We consider here the problem of computing efficiently this rational function. It could  be the first step towards describing  the  algebras of invariants.

 For  the case of  binary forms   $n=2,$ $d\leq 10,$ $d=12$  the  Poincar\'e series $\mathcal{P}(\mathcal{I}_{2,d},t)$   were calculated by Sylvester and  Franklin, see  \cite{SF}, \cite{Sylv-12}. They used the Cayley-Sylvester formula for the dimension of graded subspaces of the algebra invariants of binary  form. Relatively recently, Springer \cite{SP} derived  the  formula for computing the Poincar\'e  series of the algebras of invariants of the binary $d$-forms. This formula has been used by Brouwer and Cohen  \cite{BC} for  the Poincar\'e  series calculations in the cases  $d\leq 17$ and also by Littelmann and Procesi  \cite{LP} for even  $d\leq 36.$   The explicit form of the Poincar\'e series for  $d\leq 30$ is given in     \cite{BI}.
The Poincar\'e series for the algebras of invariants of ternary forms were calculated for  the case  $d=3$  in \cite{DerK} and for $d=4$   in \cite{Shi} by using of  Molien's formula and the residue theorem.  For  the case $n$-ary form $n>3$  we  do not know any results  about  the Poincar\'e series.

In the paper \cite{B_n}, the first author derived a general formula for  calculating the Poincar\'e series $\mathcal{P}(\mathcal{I}_{n,d},t).$ The formula is transformed to a constant term in the scope of MacMahon Partition Analysis,
which have been recently translated into computer software by
Andrews et al. \cite{AndrewsMPA} and by the second author, Xin \cite{XinIterate}. By using Xin's upgraded \textsf{Maple} package Ell2,
 we are able to present for the first time  the Poincar\'e series of the algebras invariants  for  the ternary form, $d=5,6;$ for the quaternary form, $d\leq 3;$ and for quinary form, $d\leq 2.$ The package also directly computes $\mathcal{P}(\mathcal{I}_{2,d},t)$ for $d\le 58$, confirming the truth of the Dixmier conjecture \cite{Dix} on the denominator in these cases.

\section{The working formulas}

Let $E_{k,i}$ denote the matrix that has a one in the $k$-th row and $i$-th column and  zeros elsewhere. Let
$$
\mathfrak{h}=\{c_1 E_{1,1} +c_2 E_{2,2}+\cdots+c_n E_{n,n} \mid c_1+c_2+\cdots+c_n=0, c_i \in \mathbb{C}\},
$$
be  the Cartan subalgebra of the Lie algebra $\mathfrak{sl_{n}}$.
Define $L_i \in \mathfrak{h}^*$ by  $L_i(E_{j,j})=\delta_{i,j}.$ Let $\beta_{i,j}=L_i-L_{j},$ ${ 1\leqslant i<j \leqslant n}$ be the positive roots of the algebra  $\mathfrak{sl_{n}}$  and let  ${\phi_i=L_1+L_2+\ldots+L_{i},}$ $ i=1,\ldots, n-1$ be  the fundamental weights.
 The matrices  $$H_1:=E_{1,\,1}{-}E_{2,\, 2}, H_2:=E_{2,\,2}{-}E_{3,\,3}, \ldots H_{n-1}:=E_{n-1,\,n-1}{-}E_{n,\,n}$$  generate the Cartan  subalgebra $\mathfrak{h}.$
It is easy to check that  $\phi_i(H_j)=\delta_{i,j}.$
 Denote by  $\lambda=(\lambda_1,\lambda_2,\ldots,\lambda_{n-1})$  the weight
$$
\lambda_1 \phi_1+\lambda_2 \phi_2+\ldots+\lambda_{n-1} \phi_{n-1}, \lambda_i \in \mathbb{Z}.
$$
In the notation  half the  sum of all positive roots  $\rho $ equals   $(1,1,\ldots,1).$ Note that $\lambda(H_i)=\lambda_i.$

The following statement holds:
\begin{thm}[\cite{B_n}] \label{t-1}
The Poincar\'e series $\mathcal{P}_{n,d}(t)$ of the algebra $\mathcal{I}_{n,d}$ equals
\begin{gather}\label{main_n}
\mathcal{P}(\mathcal{I}_{n,d},t)= \frac{1}{(2 \pi i)^{n-1}} \oint_{| \textit{\textbf{q}}|=1}   \frac{\displaystyle  \sum_{s \in \mathcal{W}} (-1)^{|s|} \textit{\textbf{q}}^{n \{\rho-s(\rho)\}'}  }{\displaystyle \prod_{|\eta| \leq  d } \left(1-t \textit{\textbf{q}}^{n \eta-d \rho}\right)} \frac{d \textit{\textbf{q}}}{ \textit{\textbf{q}}},
\end{gather}
where $\rho=(1,\ldots,1)$ is  half the sum of the positive roots, $\eta:=(\eta_1,\eta_2,\ldots,\eta_{n-1}) \in \mathbb{N}^{n-1},$ $|\eta|:=\sum \eta_i, $  $\mathcal{W}$  is the Weyl group of $\mathfrak{sl}_n,$ $\{\mu \}$ is the unique dominant weight on the orbit $\mathcal{W}(\mu)$ for  the weight ${\mu=(\mu_1,\ldots,\mu_{n-1})}$ and $\mu'=(\mu_1',\ldots,\mu_{n-1}')$
\begin{equation}\label{mu}
\displaystyle \mu_i'=\left(\sum_{s=i}^{n-2} \mu_s-\frac{1}{n}\Bigl(\,\sum_{s=1}^{n-2} s \mu_s-\mu_{n-1}\Bigr) \right), i=1,\ldots, n-1.\\
\end{equation}
\end{thm}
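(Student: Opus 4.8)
The plan is to derive \eqref{main_n} from classical invariant theory together with the Weyl character formula, followed by a change of torus coordinates tailored to MacMahon Partition Analysis.

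\emph{Step 1: from invariants to multiplicities.} Since $\mathbb{C}[V_{n,d}]_k=\mathrm{Sym}^k(V_{n,d}^*)$ and the trivial module is self-dual, $\dim(\mathcal I_{n,d})_k=\dim\mathrm{Sym}^k(V_{n,d})^{SL_n}$, the multiplicity of the trivial $\mathfrak{sl}_n$-module $V_0$ in $\mathrm{Sym}^k(V_{n,d})$. Weyl's formula extracts such a multiplicity from the torus character: for a finite-dimensional $\mathfrak{sl}_n$-module $M$ with character $\chi_M(\boldsymbol q)$ on the maximal torus,
\[
[\,V_0:M\,]=\CT_{\boldsymbol q}\Bigl(\chi_M(\boldsymbol q)\sum_{s\in\mathcal W}(-1)^{|s|}\boldsymbol q^{\,\rho-s(\rho)}\Bigr),
\]
where $\CT$ denotes the constant term taken along $|\boldsymbol q|=1$, i.e.\ $\tfrac1{(2\pi i)^{n-1}}\oint(\cdots)\tfrac{d\boldsymbol q}{\boldsymbol q}$. (Equivalently, one starts from Molien's formula, i.e.\ integration over $SU(n)$, applies the Weyl integration formula, and cancels one Vandermonde factor against the $\mathcal W$-antisymmetrisation of the remaining integrand; the single alternating sum in the numerator is what survives.)

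\emph{Step 2: the generating series.} Summing symmetric powers, $\sum_{k\ge0}\chi_{\mathrm{Sym}^k(V_{n,d})}(\boldsymbol q)\,t^k=\prod_{\eta}\bigl(1-t\,\boldsymbol q^{\,\mathrm{wt}(\eta)}\bigr)^{-1}$, the product running over the weights of $V_{n,d}$ with multiplicity. These are the weights of $\mathrm{Sym}^d$ of the standard representation, labelled by the exponent vectors of degree-$d$ monomials in $n$ variables; eliminating the redundant last exponent, they are indexed by $\eta\in\mathbb N^{n-1}$ with $|\eta|\le d$. Interchanging the $k$-sum with the constant term (justified for $|t|$ small by absolute convergence) yields
\[
\mathcal P(\mathcal I_{n,d},t)=\CT_{\boldsymbol q}\;\frac{\displaystyle\sum_{s\in\mathcal W}(-1)^{|s|}\boldsymbol q^{\,\rho-s(\rho)}}{\displaystyle\prod_{|\eta|\le d}\bigl(1-t\,\boldsymbol q^{\,\mathrm{wt}(\eta)}\bigr)},
\]
which is \eqref{main_n} except for the precise shape of the exponents.

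\emph{Step 3: integralising the exponents.} In fundamental-weight coordinates the characters of $SL_n$-modules take values in $\tfrac1n\mathbb Z^{n-1}$; the substitution $\mu\mapsto n\mu'$ with $\mu'$ as in \eqref{mu} sends each weight to an integer vector, turns $\mathrm{wt}(\eta)$ into $n\eta-d\rho$, and — this I must verify — is an invertible \emph{unimodular} change of torus variables, so that the contour integral is unchanged and the new coordinates may again be denoted $\boldsymbol q$. Finally, since the denominator $\prod_{|\eta|\le d}(1-t\,\boldsymbol q^{\,n\eta-d\rho})$ equals $\sum_k t^k\chi_{\mathrm{Sym}^k(V_{n,d})}$ and is therefore $\mathcal W$-invariant, each numerator exponent may be replaced by any of its Weyl conjugates without affecting the constant term, because $\CT[f(\boldsymbol q)\boldsymbol q^{\,w\mu}]=\CT[f(\boldsymbol q)\boldsymbol q^{\,\mu}]$ for $\mathcal W$-symmetric $f$; replacing $\rho-s(\rho)$ by the dominant representative $\{\rho-s(\rho)\}$ of its orbit produces precisely the numerator $\sum_{s\in\mathcal W}(-1)^{|s|}\boldsymbol q^{\,n\{\rho-s(\rho)\}'}$ of \eqref{main_n}.

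\emph{The main obstacle.} I expect the genuine work to lie in Step 3: pinning down conventions so that the weights of $V_{n,d}$ emerge exactly as $n\eta-d\rho$ (standard versus contragredient module, $SL_n$ versus $GL_n$ normalisation, and the sign of $\rho-s(\rho)$), and checking that \eqref{mu} really is a unimodular automorphism of the character lattice, so that passing to it preserves the constant-term extraction. The representation theory (Weyl's multiplicity formula, the symmetric-power generating function) and the analytic interchange of summation with the contour integral are routine. As the statement is due to the first author, one may alternatively cite \cite{B_n} directly; the above is the route I would reconstruct.
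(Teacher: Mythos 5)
The paper does not prove Theorem~\ref{t-1} at all: it is imported verbatim from the first author's earlier work \cite{B_n}, so the ``paper's own proof'' is the citation. Your reconstruction (trivial-isotypic multiplicity via the Weyl character/denominator formula, the symmetric-power generating function giving the product over the weights $n\eta-d\rho$ of $V_{n,d}$, interchange of the $k$-sum with the torus integral, then passage to dominant orbit representatives using the $\mathcal{W}$-invariance of the denominator) is indeed the natural route and matches what the title of \cite{B_n} suggests that reference does. Steps 1 and 2 are sound as written.

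The one point you single out as ``the genuine work'' is, however, stated in a form that is false and would fail if you checked it: the exponent map $\mu\mapsto n\mu'$ of \eqref{mu} is \emph{not} unimodular. For $n=3$ it is $\mu\mapsto(2\mu_1+\mu_2,\,-\mu_1+\mu_2)$ with determinant $3$, and for $n=4$ the matrix displayed in the paper has determinant $16$; in general the determinant is a positive power of $n$, which is exactly why the factor $1/n$ in \eqref{mu} is needed to land in $\mathbb{Z}^{n-1}$. So this is not a change of torus coordinates and you cannot argue that ``the contour integral is unchanged'' on those grounds. The repair is easy but different from what you wrote: constant-term extraction $\CT_{q}\,\boldsymbol{q}^{\nu}=\delta_{\nu,0}$ is preserved under \emph{any} injective linear map $A$ on exponents whose image on the occurring exponents is integral, since $A\nu=0$ iff $\nu=0$; unimodularity is irrelevant. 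You must then also check that the conjugated $\mathcal{W}$-action still fixes the (transformed) denominator so that the replacement of $\rho-s(\rho)$ by $\{\rho-s(\rho)\}$ before applying $(\cdot)'$ remains legitimate. With that substitution of the correct lemma for the false one, the outline goes through; as it stands, Step~3 contains a verification that cannot succeed as formulated.
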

Here we used the multi-index notation $\textit{\textbf{q}}^{\mu}:=q_1 ^{\mu_1} \cdots q_{n-1} ^{\mu_{n-1}}$  and
$$
\oint_{|q_{n-1}|=1} \ldots  \oint_{|q_{1}|=1} f(t,q_1,q_2,\ldots,q_{n-1})  \frac{dq_1 \ldots dq_{n-1}}{q_1\ldots q_{n-1}}:=  \oint_{| \textit{\textbf{q}}|=1} f(t,\textit{\textbf{q}})  \frac{d \textit{\textbf{q}}}{ \textit{\textbf{q}}},
$$
for arbitrary  rational function $f(t,q_1,q_2,\ldots,q_{n-1}).$

 Denote by $\mathcal{B}_n(\textit{\textbf{q}})$ the numerator of the integrand  of the integral   $(\ref{main_n}).$
 It was shown in \cite{B_n} that
 $$
 \displaystyle \mathcal{B}_3(\textit{\textbf{q}})=1+q_2^3\,q_1^3+\frac{q_1^6}{q_2^3}-2\,q_1^3-q_1^6.
 $$
Let us calculate now  $\mathcal{B}_4(\textit{\textbf{q}})$  and  $\mathcal{B}_5(\textit{\textbf{q}}).$

\begin{thm} The following statement holds

\begin{gather*}
\displaystyle  \mathcal{B}_4(\textit{\textbf{q}}):=1+{\frac {{q_{{1}}}^{4}{q_{{2}}}^{4}}{{q_{{3}}}^{4}}}+{\frac {{q_{{1}}
}^{12}{q_{{2}}}^{4}}{{q_{{3}}}^{4}}}+2\,{q_{{1}}}^{8}{q_{{2}}}^{4}+2\,
{q_{{1}}}^{4}{q_{{2}}}^{4}+2\,{\frac {{q_{{1}}}^{12}}{{q_{{3}}}^{4}}}+
2\,{\frac {{q_{{1}}}^{8}}{{q_{{3}}}^{4}}}+{\frac {{q_{{1}}}^{8}{q_{{2}
}}^{8}}{{q_{{3}}}^{8}}}-3\,{q_{{1}}}^{4}-\\-{q_{{1}}}^{12}-2\,{\frac {{q_
{{1}}}^{8}{q_{{2}}}^{4}}{{q_{{3}}}^{4}}}-{\frac {{q_{{1}}}^{8}{q_{{2}}
}^{8}}{{q_{{3}}}^{4}}}-{\frac {{q_{{1}}}^{12}{q_{{2}}}^{4}}{{q_{{3}}}^
{8}}}-{q_{{1}}}^{4}{q_{{2}}}^{4}{q_{{3}}}^{4}-2\,{q_{{1}}}^{8}-{\frac
{{q_{{1}}}^{12}}{{q_{{2}}}^{4}{q_{{3}}}^{4}}}.
\end{gather*}

\end{thm}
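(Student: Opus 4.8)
The plan is to compute $\mathcal{B}_4(\textit{\textbf{q}})$ directly from its definition as the numerator in Theorem~\ref{t-1}, specialized to $n=4$. The numerator is the alternating sum $\sum_{s \in \mathcal{W}} (-1)^{|s|} \textit{\textbf{q}}^{4\{\rho - s(\rho)\}'}$, where $\mathcal{W}$ is the Weyl group of $\mathfrak{sl}_4$, i.e. the symmetric group $S_4$ with $24$ elements. So the first step is to enumerate all $24$ permutations $s$, compute $s(\rho)$ for $\rho = (1,1,1)$ in the $\phi$-basis (equivalently, act by $s$ on the $L_i$-coordinates and translate back), form $\rho - s(\rho)$, find the unique dominant weight $\{\rho - s(\rho)\}$ on its Weyl orbit, and then apply the linear transformation~(\ref{mu}) with $n=4$ to get the primed coordinates. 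Finally one multiplies the exponent vector by $4$ and assembles the monomial $q_1^{\bullet}q_2^{\bullet}q_3^{\bullet}$ with sign $(-1)^{|s|}$.

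The cleanest way to organize this is to pass through the standard $\varepsilon$-coordinates: write weights of $\mathfrak{sl}_4$ as integer vectors $(a_1,a_2,a_3,a_4)$ with $\sum a_i = 0$, modulo the diagonal, where the Weyl group permutes the $a_i$. In these coordinates $\rho$ is proportional to $(3,1,-1,-3)$, the Weyl action is just permutation of entries, and ``take the dominant representative'' means ``sort in weakly decreasing order.'' Converting $(a_1,a_2,a_3,a_4)$ to the $\phi$-basis $(\lambda_1,\lambda_2,\lambda_3)$ is done by $\lambda_i = a_i - a_{i+1}$, and the map~(\ref{mu}) should then land us back in a form matching the $q_i$ exponents in $\mathcal{B}_3$ as a sanity check at $n=3$. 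I would first re-derive $\mathcal{B}_3(\textit{\textbf{q}})$ this way to confirm the bookkeeping reproduces the stated five-term answer, then rerun the identical procedure for $n=4$.

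The main obstacle is purely the volume and delicacy of the computation: $24$ Weyl elements, each requiring a sort-to-dominant step and then an application of the not-especially-transparent formula~(\ref{mu}), followed by collecting like terms — many of the $24$ contributions coincide as monomials and must be added (hence the coefficients $2$ and $-3$ appearing in the claimed $\mathcal{B}_4$), and sign errors or a mistranscribed exponent are easy to make by hand. There is also a conceptual point to verify once: that the quantity $\{\rho - s(\rho)\}'$ is genuinely independent of which dominant representative one picks and that~(\ref{mu}) is being applied to the dominant weight, not to $\rho - s(\rho)$ itself; the identity element $s = \mathrm{id}$ must contribute the constant term $1$, which fixes the normalization. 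I expect the verification that every monomial in the claimed expression arises with exactly the stated multiplicity — in particular that the leading negative term is $-3q_1^4$ and not $-2q_1^4$ or $-4q_1^4$ — to be the step most in need of care; this is best done by tabulating all $24$ rows explicitly (a finite table, suppressed in the write-up) and then summing columns.

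Given the mechanical nature of the task, the write-up will present the computation as: (i) reduce~(\ref{main_n}) for $n=4$ to the stated combinatorial recipe; (ii) exhibit the table of the $24$ terms in $\varepsilon$-coordinates with their dominant representatives and primed exponents; (iii) collect terms to obtain $\mathcal{B}_4(\textit{\textbf{q}})$; and, if desired, note that the same recipe with $S_5$ ($120$ elements, best left to the package Ell2) yields $\mathcal{B}_5(\textit{\textbf{q}})$.
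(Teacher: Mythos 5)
Your proposal is correct and follows essentially the same route as the paper: enumerate the $24$ elements of the Weyl group of $\mathfrak{sl}_4$, form $\rho-s(\rho)$, pass to the unique dominant representative $\{\rho-s(\rho)\}$, apply the linear map~(\ref{mu}) with $n=4$, and sum the resulting monomials $\textit{\textbf{q}}^{4\{\rho-s(\rho)\}'}$ with signs. Your use of $\varepsilon$-coordinates (so that the Weyl action is permutation and "dominant" means "sorted") is only a bookkeeping variant of the paper's explicit reflection computations in the fundamental weight basis, and your worry about well-definedness is vacuous since the dominant representative on a Weyl orbit is unique.
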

\begin{proof}
For  the case  of ternary form we have $\phi_1=L_1, \phi_2=L_1+L_2,$ $\phi_3=L_1+L_2+L_3.$ Then  half the sum of the positive roots $\rho$ in the fundamental weight basis is equal to  $(1,1,1).$

Consider the four  positive roots
$$
\begin{array}{l}
\beta_1:=L_1-L_2=2\phi_1-\phi_2=(2,-1,0),\\
\beta_2:=L_2-L_3=-\phi_1+2\phi_2=(-1,2,0),\\
\beta_3:=L_3-L_4=2\phi_3-\phi_2=(0,-1,2),\\
\beta_4:=L_1-L_4=\phi_1+\phi_3=(1,0,1).
\end{array}
$$
 The Weyl group of Lie  algebra  $\mathfrak{sl_{4}}$  is  generated by the  four  reflections  $s_{\beta_1},$ $s_{\beta_2},$ $s_{\beta_3},$ $s_{\beta_4}$ and  consists of 24 elements. The reflections $s_{\beta_i}$ act on a weight $\lambda=(\lambda_1,\lambda_2,\lambda_3)$ by  $s_{\beta_i}(\lambda)=\lambda - \lambda_i \beta_i,$ $i=1,2,3$ and by  $s_{\beta_4}(\lambda)=\lambda-(\lambda_1+\lambda_2+\lambda_3) \beta_4.$ It is easy to see that the stabilizer $\mathcal{W}_{\rho}$ is the trivial subgroup. Then  the orbit-stabilizer theorem implies that  the orbit  $\mathcal{W}(\rho)$ consists of  $24$ weights. Let us divide the element of   $\mathcal{W}(\rho)$ into 2 parts. Denote by  $\mathcal{W}(\rho)^{+},\mathcal{W}(\rho)^{-}$ the subsets  whose elements are the weights  generated by even and odd reflections. We  have
 $$
\begin{array}{c}
    \mathcal{W}(\rho)^{+}= \{(-1,-1,-1),(1,1,1),(1,-2,3),(-1,3,-1),(-3,2,-1),(3,-2,1),\\ (-1,2,-3),(-2,1,2),(2,1,-2),(2,-1,-2),(-2,-1,2),(1,-3,1)\}, \\
\end{array}
$$
and
 $$
\begin{array}{c}
    \mathcal{W}(\rho)^{-}= \{(-1,2,1),(1,-2,-1),(1,2,-1),(2,-1,2),(-2,1,-2),(-1,-2,1),\\ (-1,-1,3),(1,1,-3),(2,-3,2),(-2,3,-2),(3,-1,-1),(-3,1,1)\}. \\
\end{array}
$$

Therefore
 $$
\begin{array}{c}
\rho-\mathcal{W}(\rho)^{+}=\{(-1,2,3),(4,-1,2),(2,2,2),(0,4,0),(0,0,0),\\
(0,3,-2),(-1,0,3),(2,-2,2),(3,2,-1),(-2,3,0),(3,0,-1),(2,-1,4) \},
\end{array}
$$
 $$
\begin{array}{c}
\rho-\mathcal{W}(\rho)^{-}=\{ (2,3,0),(2,2,-2),(-2,2,2),(0,0,4),(-1,4,-1),\\
(3,-2,3),(2,-1,0),(-1,2,-1),(0,-1,2),(3,0,3),(0,3,2),(4,0,0) \}.
\end{array}
$$
Recall that $\{\mu\}$  denotes the unique dominant weight on the orbit $\mathcal{W}(\mu).$  Then for the elements of $\rho-\mathcal{W}(\rho)^{+}$  we  have
$$
\begin{array}{ll}
\{(-1,2,3)\}=(1,1,3),\{(4,-1,2)\}=(3,1,1),\{(2,2,2)\}=(2,2,2),\{(0,4,0)\}=(0,4,0),\\ \{(0,0,0)\}=(0,0,0), \{(0,3,-2)\}=\{(-1,0,3)\}=(0, 1, 2),\{(2,-2,2)\}=(0, 2, 0),\\
\{(3,2,-1)\}=(3, 1, 1),\{(-2,3,0)\}=\{(3,0,-1)\}=(2, 1, 0),\{(2,-1,4)\}=(1, 1, 3).
\end{array}
$$
Similarly for the set $\rho-\mathcal{W}(\rho)^{-}$  we get
$$
\begin{array}{ll}
\{(2,3,0)\}=(2,3,0),\{(2,2,-2)\}=(2, 0, 2),\{(-2,2,2)\}=(2, 0, 2),\{(0,0,4)\}=(0,4,0),\\ \{(-1,4,-1)\}=(1, 2, 1), \{(3,-2,3)\}=(1, 2, 1),\{(2,-1,0)\}=\{(-1,2,-1)\}=(1, 0, 1),\\
\{(0,-1,2)\}=(1, 0, 1),\{(3,0,3)\}=(3,0,3),\{(0,3,2)\}=(0,3,2),\{(4,0,0)\}=(4,0,0).
\end{array}
$$

By $(\ref{mu})$ we get
$$
(\mu_1,\mu_2,\mu_3)'=\left( \frac{3\,\mu_{{1}}+2\,\mu_{{2}}+\mu_{{3}}}{4} ,\frac{-\mu_{{1}}-2\,\mu_{{2}}+\,\mu_{{3}}}{4},\frac{-\,\mu_{{1}}+2\,\mu_{{2}}+\mu_
{{3}}}{4}  \right).
$$
It implies that for  the set $\{\rho-\mathcal{W}(\rho)^{+}\}'$ we have
$$
\begin{array}{c}
(1,1,3)'=(2, 1, 0),(3,1,1)'=(3, 0, -1),(2,2,2)'=(3, 1, -1),(0,4,0)'=(2, 2, -2),\\
(0,0,0)'=(0,0,0), (0, 1, 2)'=(1, 1, 0),(0, 2, 0)'=(1, 1, -1),\\
(3, 1, 1)'=(3, 0, -1),(2, 1, 0)'=(2, 0, -1),(1, 1, 3)'=(2, 1, 0)\end{array}
$$
and for $\{\rho-\mathcal{W}(\rho)^{-}\}'$
$$
\begin{array}{ll}
(2,3,0)'=(3, 1, -2),(2, 0, 2)'=(2, 0, 0),(2, 0, 2)'=(2, 0, 0),(0,4,0)'=(2, 2, -2),\\
 (1, 2, 1)'=(2, 1, -1), (1, 2, 1)'=(2, 1, -1),(1, 0, 1)'=(1, 0, 0),\\
(3,0,3)'=(3, 0, 0),(0,3,2)'=(2, 2, -1),(4,0,0)'=(3, -1, -1).
\end{array}
$$
Now we may calculate  the numerator  of the integrand (\ref{main_n})
\begin{gather*}
\mathcal{B}_4(q_1,q_2,q_3)=\sum_{s \in \mathcal{W}} (-1)^{|s|} \textit{\textbf{q}}^{4 \{\rho-s(\rho)\}'}=\sum_{\mu \in \{\rho-\mathcal{W}(\rho)^{+}\}'}\textit{\textbf{q}}^{4 \mu}-\sum_{\mu \in \{\rho-\mathcal{W}(\rho)^{-}\}'}\textit{\textbf{q}}^{4 \mu}=\\ \\
=1+{\dfrac {{q_{{1}}}^{4}{q_{{2}}}^{4}}{{q_{{3}}}^{4}}}+{\frac {{q_{{1}}
}^{12}{q_{{2}}}^{4}}{{q_{{3}}}^{4}}}+2\,{q_{{1}}}^{8}{q_{{2}}}^{4}+2\,
{q_{{1}}}^{4}{q_{{2}}}^{4}+2\,{\frac {{q_{{1}}}^{12}}{{q_{{3}}}^{4}}}+
2\,{\frac {{q_{{1}}}^{8}}{{q_{{3}}}^{4}}}+{\frac {{q_{{1}}}^{8}{q_{{2}
}}^{8}}{{q_{{3}}}^{8}}}-3\,{q_{{1}}}^{4}-\\ \\ -{q_{{1}}}^{12}-2\,{\dfrac {{q_
{{1}}}^{8}{q_{{2}}}^{4}}{{q_{{3}}}^{4}}}-{\frac {{q_{{1}}}^{8}{q_{{2}}
}^{8}}{{q_{{3}}}^{4}}}-{\frac {{q_{{1}}}^{12}{q_{{2}}}^{4}}{{q_{{3}}}^
{8}}}-{q_{{1}}}^{4}{q_{{2}}}^{4}{q_{{3}}}^{4}-2\,{q_{{1}}}^{8}-{\frac
{{q_{{1}}}^{12}}{{q_{{2}}}^{4}{q_{{3}}}^{4}}}.
\end{gather*}
\end{proof}
In the similar way we obtain
\begin{gather*}
\mathcal{B}_5(q_1,q_2,q_3,q_4)=\\ 1 - {\displaystyle \frac {2\,{q_{1}}^{15}}{{q_{3}}^{5}\,{q_{4}}^{
5}}}  - {\displaystyle \frac {2\,{q_{1}}^{10}\,{q_{2}}^{5}}{{q_{4
}}^{5}}}  + {\displaystyle \frac {{q_{1}}^{20}}{{q_{4}}^{10}}}
 - {\displaystyle \frac {4\,{q_{1}}^{15}\,{q_{2}}^{5}}{{q_{4}}^{
10}}}  - {\displaystyle \frac {2\,{q_{1}}^{15}\,{q_{2}}^{10}}{{q
_{4}}^{5}}}  + {\displaystyle \frac {2\,{q_{1}}^{10}\,{q_{2}}^{10
}}{{q_{4}}^{10}}}  - {\displaystyle \frac {{q_{1}}^{20}\,{q_{2}}
^{10}}{{q_{4}}^{15}}}  \\
\mbox{} + {\displaystyle \frac {{q_{1}}^{15}\,{q_{2}}^{15}}{{q_{4
}}^{15}}}  + {\displaystyle \frac {3\,{q_{1}}^{5}\,{q_{2}}^{5}}{{
q_{4}}^{5}}}  - {\displaystyle \frac {2\,{q_{1}}^{20}\,{q_{2}}^{5
}}{{q_{4}}^{10}}}  - {\displaystyle \frac {{q_{1}}^{15}\,{q_{2}}
^{15}}{{q_{4}}^{10}}}  - {\displaystyle \frac {3\,{q_{1}}^{20}}{{
q_{3}}^{5}\,{q_{4}}^{5}}}  - 2\,{q_{1}}^{5}\,{q_{2}}^{5}\,{q_{3}}
^{5} + {\displaystyle \frac {{q_{1}}^{20}\,{q_{2}}^{10}}{{q_{4}}
^{10}}}  \\
\mbox{} + {\displaystyle \frac {2\,{q_{1}}^{15}\,{q_{2}}^{10}}{{q
_{4}}^{10}}}  + {\displaystyle \frac {2\,{q_{1}}^{20}\,{q_{2}}^{5
}}{{q_{4}}^{5}}}  - 3\,{q_{1}}^{10}\,{q_{2}}^{5}\,{q_{3}}^{5} -
{\displaystyle \frac {4\,{q_{1}}^{10}\,{q_{2}}^{10}}{{q_{4}}^{5}}
}  + {\displaystyle \frac {2\,{q_{1}}^{10}\,{q_{2}}^{10}\,{q_{3}}
^{5}}{{q_{4}}^{5}}}  \\
\mbox{} + {\displaystyle \frac {2\,{q_{1}}^{15}\,{q_{2}}^{10}}{{q
_{3}}^{5}\,{q_{4}}^{5}}}  - {\displaystyle \frac {2\,{q_{1}}^{15}
\,{q_{2}}^{15}}{{q_{3}}^{5}\,{q_{4}}^{10}}}  + {\displaystyle
\frac {2\,{q_{1}}^{10}\,{q_{2}}^{5}\,{q_{3}}^{5}}{{q_{4}}^{5}}}
 + {\displaystyle \frac {{q_{1}}^{15}\,{q_{2}}^{15}}{{q_{3}}^{5}
\,{q_{4}}^{5}}}  + {\displaystyle \frac {{q_{1}}^{15}\,{q_{2}}^{
15}}{{q_{3}}^{10}\,{q_{4}}^{10}}}  - {\displaystyle \frac {2\,{q
_{1}}^{20}\,{q_{2}}^{5}}{{q_{3}}^{5}\,{q_{4}}^{5}}}  \\
\mbox{} - {\displaystyle \frac {{q_{1}}^{20}\,{q_{2}}^{5}\,{q_{3}
}^{5}}{{q_{4}}^{10}}}  - {\displaystyle \frac {2\,{q_{1}}^{10}\,{
q_{2}}^{5}}{{q_{3}}^{5}\,{q_{4}}^{5}}}  - {\displaystyle \frac {{
q_{1}}^{10}\,{q_{2}}^{10}\,{q_{3}}^{10}}{{q_{4}}^{10}}}  +
{\displaystyle \frac {2\,{q_{1}}^{15}\,{q_{2}}^{5}}{{q_{3}}^{5}\,
{q_{4}}^{5}}}  - {\displaystyle \frac {2\,{q_{1}}^{15}\,{q_{2}}^{
10}}{{q_{3}}^{5}\,{q_{4}}^{10}}}  + {\displaystyle \frac {{q_{1}}
^{10}\,{q_{2}}^{10}}{{q_{3}}^{5}\,{q_{4}}^{5}}}  \\
\mbox{} + {\displaystyle \frac {2\,{q_{1}}^{20}\,{q_{2}}^{10}}{{q
_{3}}^{5}\,{q_{4}}^{10}}}  + {\displaystyle \frac {{q_{1}}^{20}\,
{q_{2}}^{5}\,{q_{3}}^{5}}{{q_{4}}^{15}}}  - {\displaystyle
\frac {2\,{q_{1}}^{10}\,{q_{2}}^{10}\,{q_{3}}^{5}}{{q_{4}}^{10}}
}  - {\displaystyle \frac {2\,{q_{1}}^{15}\,{q_{2}}^{5}\,{q_{3}}
^{5}}{{q_{4}}^{5}}}  + {\displaystyle \frac {2\,{q_{1}}^{20}\,{q
_{2}}^{5}}{{q_{3}}^{5}\,{q_{4}}^{10}}}  \\
\mbox{} - {\displaystyle \frac {{q_{1}}^{20}\,{q_{2}}^{10}}{{q_{3
}}^{5}\,{q_{4}}^{5}}}  + {\displaystyle \frac {2\,{q_{1}}^{15}\,{
q_{2}}^{10}\,{q_{3}}^{5}}{{q_{4}}^{10}}}  + {q_{1}}^{10}\,{q_{2}}
^{10} + 3\,{q_{1}}^{5}\,{q_{2}}^{5} + 2\,{q_{1}}^{15}\,{q_{2}}^{5
} + 4\,{q_{1}}^{10}\,{q_{2}}^{5} + {\displaystyle \frac {4\,{q_{1
}}^{15}}{{q_{4}}^{5}}}  \\
\mbox{} + {\displaystyle \frac {2\,{q_{1}}^{20}}{{q_{4}}^{5}}}
 - {\displaystyle \frac {2\,{q_{1}}^{5}\,{q_{2}}^{5}\,{q_{3}}^{5}
}{{q_{4}}^{5}}}  + {\displaystyle \frac {{q_{1}}^{20}}{{q_{2}}^{5
}\,{q_{3}}^{5}\,{q_{4}}^{5}}}  - {\displaystyle \frac {{q_{1}}^{
20}\,{q_{2}}^{10}}{{q_{3}}^{10}\,{q_{4}}^{10}}}  + {q_{1}}^{5}\,{
q_{2}}^{5}\,{q_{3}}^{5}\,{q_{4}}^{5} + {\displaystyle \frac {{q_{
1}}^{10}\,{q_{2}}^{5}\,{q_{3}}^{5}}{{q_{4}}^{10}}}  \\
\mbox{} + {\displaystyle \frac {{q_{1}}^{10}\,{q_{2}}^{10}\,{q_{3
}}^{10}}{{q_{4}}^{15}}}  + {\displaystyle \frac {2\,{q_{1}}^{15}
\,{q_{2}}^{5}\,{q_{3}}^{5}}{{q_{4}}^{10}}}  - {\displaystyle
\frac {2\,{q_{1}}^{15}\,{q_{2}}^{10}\,{q_{3}}^{5}}{{q_{4}}^{15}}
}  + {\displaystyle \frac {3\,{q_{1}}^{10}}{{q_{4}}^{5}}}  - 4\,{
q_{1}}^{5} - 2\,{q_{1}}^{15} - 3\,{q_{1}}^{10} \\
\mbox{} - {q_{1}}^{20}
\end{gather*}

\section{Computing the Poincar\'e series by MacMahon partition analysis}
Our first step for computing the Poincar\'e series is to get a constant term expression by the following well-known transformation. We present here the version best fit our situation.

  \begin{prop} \label{p-1.1} If   $f(t,q_1,q_2,\ldots ,q_n)$ is
a power series in $t$ with coefficients Laurent polynomials in $\mathbb{C}[q_1,q_2,\ldots ,q_n; 1/q_1,1/q_2,\ldots
,1/q_n]$, then
\begin{align}
 \left({1\over  2\pi}\right)^n\int_{|q_n|=1} \cdots
\int_{|q_1|=1} f(t,q_1,\dots,q_n)\frac{dq_1dq_2\cdots dq_n}{q_1\cdots q_n} = \CT_q f(t,q_1,\dots,q_n), 
\end{align}
where $\CT_q f$ means to take constant term, i.e., the coefficient of $q_1^0q_2^0\cdots q_n^0$, in $f$.
\end{prop}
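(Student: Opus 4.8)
The plan is to reduce the multiple integral to an iterated one and to evaluate it by the one‑variable orthogonality of characters on the circle, then reassemble. First I would observe that, by hypothesis, both sides of the claimed identity are power series in $t$ whose coefficients lie in $\mathbb{C}[q_1,\dots,q_n;1/q_1,\dots,1/q_n]$, and that the multiple integral and the operator $\CT_q$ both act $\mathbb{C}$‑linearly and term‑by‑term on the coefficient of each power of $t$. Hence it suffices to verify the identity after extracting the coefficient of a fixed $t^m$, which is a single Laurent polynomial, and then — again by linearity — when $f$ is a single Laurent monomial $q^{\mathbf{a}} := q_1^{a_1}\cdots q_n^{a_n}$ with $\mathbf{a}=(a_1,\dots,a_n)\in\mathbb{Z}^n$. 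At this point nothing is infinite and no question of convergence survives: the whole matter has been reduced to a finite, purely algebraic bookkeeping plus one honest computation.

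For a single monomial the integrand $q^{\mathbf{a}}/(q_1\cdots q_n)=\prod_{j=1}^n q_j^{\,a_j-1}$ is a product of functions of the separate variables and the torus $|q_1|=\cdots=|q_n|=1$ is a product of circles, so Fubini's theorem gives
\[
\left(\frac{1}{2\pi}\right)^n\int_{|q_n|=1} \cdots \int_{|q_1|=1} q^{\mathbf{a}}\,\frac{dq_1\cdots dq_n}{q_1\cdots q_n}
=\prod_{j=1}^n\left(\frac{1}{2\pi}\int_{|q_j|=1} q_j^{\,a_j}\,\frac{dq_j}{q_j}\right).
\]
Writing $q_j=e^{i\theta_j}$ turns the $j$‑th factor into $\frac{1}{2\pi}\int_0^{2\pi}e^{i a_j\theta_j}\,d\theta_j$ — this is the convention on the symbol $\frac{dq_j}{q_j}$ over $|q_j|=1$ that the notation presupposes, and it fixes the normalisation — which equals $\delta_{a_j,0}$ by the orthogonality of the characters $\theta\mapsto e^{ik\theta}$ of the circle group. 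Consequently the product on the right is $1$ if $\mathbf{a}=(0,\dots,0)$ and $0$ otherwise, and this is exactly $\CT_q q^{\mathbf{a}}$.

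Summing back over the monomials of the $t^m$‑coefficient and then over $m$ yields the stated equality $\left(\frac{1}{2\pi}\right)^n\int\cdots\int f\,\frac{dq_1\cdots dq_n}{q_1\cdots q_n}=\CT_q f$. There is no genuine obstacle in this argument; the only points that deserve a moment's care are the reduction to monomials — one must note that the $t$‑variable is merely a passive formal parameter on which both sides act coefficient‑wise, so no interchange of an infinite sum with an integral is ever needed — and the bookkeeping of the normalisation, chosen precisely so that the one‑variable integral returns the Kronecker delta $\delta_{a_j,0}$ and not a nonzero multiple of it.
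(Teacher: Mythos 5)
Your proof is correct and follows essentially the same route as the paper's: a two-step reduction by linearity (first to the coefficient of a fixed power of $t$, then to a single Laurent monomial), followed by the one-variable evaluation $\frac{1}{2\pi}\int_{|q|=1}q^{k}\,\frac{dq}{q}=\delta_{k,0}$. Your version merely spells out the Fubini factorization and the $q_j=e^{i\theta_j}$ parametrization, which the paper leaves implicit.
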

\begin{proof}
By linearity, it is sufficient to show the proposition holds for each coefficient of $f$ in $t$, which is a Laurent polynomial $L(q_1,\dots,q_n)$. By linearity, again, it suffices to assume $L(q_1,q_2,\dots,q_n)=q_1^{k_1}\cdots q_n^{k_n}$ is a monomial. The proposition then follows from the fact that
  $$ \frac{1}{2\pi} \int_{|q_1|=1} q_1^{k_1} \frac{dq_1}{q_1} =\delta_{k_1,0},$$
  where $\delta_{a,b}$ is $1$ if $a=b$ and $0$ otherwise.
\end{proof}

Applying Proposition \ref{p-1.1} to Theorem \ref{t-1} gives the following result, which is the starting point of our calculation.
\begin{thm} The following formula  holds
$$
\mathcal{P}(\mathcal{I}_{n,d},t)=\CT_{q}\frac{\mathcal{B}_n(\textit{\textbf{q}})  }{\displaystyle \prod_{|\eta| \leq  d } \left(1-t \textit{\textbf{q}}^{n \eta-d \rho}\right)}.
$$
\end{thm}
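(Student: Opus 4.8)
The plan is to combine Theorem~\ref{t-1} with Proposition~\ref{p-1.1} by a direct substitution. Theorem~\ref{t-1} expresses $\mathcal{P}(\mathcal{I}_{n,d},t)$ as an $(n-1)$-fold contour integral over the torus $|q_1|=\cdots=|q_{n-1}|=1$ of the integrand
$$
F(t,\textit{\textbf{q}}):=\frac{\mathcal{B}_n(\textit{\textbf{q}})}{\displaystyle \prod_{|\eta|\le d}\bigl(1-t\,\textit{\textbf{q}}^{n\eta-d\rho}\bigr)},
$$
where by definition $\mathcal{B}_n(\textit{\textbf{q}})=\sum_{s\in\mathcal{W}}(-1)^{|s|}\textit{\textbf{q}}^{n\{\rho-s(\rho)\}'}$ is a Laurent polynomial in $q_1,\dots,q_{n-1}$. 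Proposition~\ref{p-1.1} asserts that, for any function that is a power series in $t$ whose coefficients are Laurent polynomials in the $q_i$, the normalized torus integral equals the constant-term operator $\CT_q$. So the proof reduces to two verifications: first, that the prefactor $(2\pi i)^{-(n-1)}$ appearing in \eqref{main_n} matches the normalization $(1/2\pi)^{n-1}$ used in Proposition~\ref{p-1.1} once the substitution $dq_j/q_j \leftrightarrow d\theta_j$ (with $q_j=e^{i\theta_j}$, so $dq_j = i q_j\, d\theta_j$) is carried out; and second, that $F(t,\textit{\textbf{q}})$ genuinely falls into the class of functions to which Proposition~\ref{p-1.1} applies.

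First I would address the normalization. Writing $q_j=e^{i\theta_j}$ gives $\tfrac{1}{2\pi i}\oint_{|q_j|=1} g(q_j)\,\tfrac{dq_j}{q_j} = \tfrac{1}{2\pi}\int_0^{2\pi} g(e^{i\theta_j})\,d\theta_j$, and taking the product over $j=1,\dots,n-1$ shows that the left-hand side of \eqref{main_n}, with its factor $(2\pi i)^{-(n-1)}$, is exactly the iterated integral $\bigl(\tfrac{1}{2\pi}\bigr)^{n-1}\int_{|q_{n-1}|=1}\cdots\int_{|q_1|=1} F(t,\textit{\textbf{q}})\,\tfrac{dq_1\cdots dq_{n-1}}{q_1\cdots q_{n-1}}$ appearing on the left-hand side of Proposition~\ref{p-1.1} (with $n$ there replaced by $n-1$). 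This is a routine change of variables; the only care needed is to confirm that the multi-index conventions $\textit{\textbf{q}}^{\mu}=q_1^{\mu_1}\cdots q_{n-1}^{\mu_{n-1}}$ and $\tfrac{d\textit{\textbf{q}}}{\textit{\textbf{q}}}=\tfrac{dq_1\cdots dq_{n-1}}{q_1\cdots q_{n-1}}$ stated after Theorem~\ref{t-1} are the same ones used in Proposition~\ref{p-1.1}, which they are by construction.

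The main point requiring justification is the second one: that $F(t,\textit{\textbf{q}})$, when expanded in $t$, has Laurent-polynomial coefficients in the $q_i$. The numerator $\mathcal{B}_n(\textit{\textbf{q}})$ is a finite $\mathbb{Z}$-linear combination of monomials $\textit{\textbf{q}}^{n\{\rho-s(\rho)\}'}$, hence a Laurent polynomial, so it suffices to treat the denominator. Each factor $1-t\,\textit{\textbf{q}}^{n\eta-d\rho}$ is invertible as a formal power series in $t$: $\tfrac{1}{1-t\,\textit{\textbf{q}}^{m}}=\sum_{k\ge 0} t^k \textit{\textbf{q}}^{km}$, a power series in $t$ whose coefficient of $t^k$ is the single Laurent monomial $\textit{\textbf{q}}^{km}$. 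Since the product over $|\eta|\le d$ is finite, the reciprocal of the denominator is a finite product of such power series, hence again a power series in $t$ whose $t^k$-coefficient is a finite $\mathbb{Z}$-linear combination of Laurent monomials, i.e.\ a Laurent polynomial; multiplying by the Laurent polynomial $\mathcal{B}_n(\textit{\textbf{q}})$ preserves this property. Therefore $F(t,\textit{\textbf{q}})$ lies in the hypothesis class of Proposition~\ref{p-1.1}, which applies with $n-1$ variables and yields
$$
\mathcal{P}(\mathcal{I}_{n,d},t)=\CT_q\, F(t,\textit{\textbf{q}})=\CT_q \frac{\mathcal{B}_n(\textit{\textbf{q}})}{\displaystyle\prod_{|\eta|\le d}\bigl(1-t\,\textit{\textbf{q}}^{n\eta-d\rho}\bigr)},
$$
as claimed. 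I do not anticipate a genuine obstacle here; the only thing to be careful about is making the formal-power-series interpretation of the constant-term operator explicit, since $\CT_q$ is being applied termwise in $t$ rather than to a bona fide Laurent polynomial in all variables simultaneously — but this is precisely the content built into the statement of Proposition~\ref{p-1.1}, so invoking it suffices.
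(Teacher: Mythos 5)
Your proposal is correct and follows exactly the paper's route: the paper proves this theorem simply by applying Proposition~\ref{p-1.1} to the integral formula of Theorem~\ref{t-1}. Your additional care about the $(2\pi i)^{-(n-1)}$ versus $(1/2\pi)^{n-1}$ normalization and the verification that the integrand is a power series in $t$ with Laurent-polynomial coefficients merely makes explicit what the paper leaves implicit.
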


The constant term in the theorem can the theoretically evaluated by the theory of MacMahon partition analysis. 
MacMahon \cite{mac} developed his Omega calculus to study partition related problems. One of his two Omega operator acting on the $\lambda$ variables is defined by
 $$ \underset{\scriptscriptstyle = }{\Omega} \sum_{i_1=-\infty}^{\infty}\cdots \sum_{i_n=-\infty}^{\infty} a_{i_1\ldots i_n}\lambda_1^{i_1}
\cdots \lambda_n^{i_n}  =a_{0,0,\dots,0},$$
 where the $a$'s are free of $\lambda$ and the summation satisfies certain convergence condition. This operator is just our constant term operator. 

It turns out that MacMahon partition analysis has wide applications, as illustrated by Andrews et al in a series of papers starting with \cite{AndrewsMPA}. Computer software such as \textsf{Mathematica} package Omega of Andrews et al \cite{Omega}, and \textsf{Maple} package Ell of Xin \cite{XinIterate} are developed.
Our calculation uses Xin's Ell2 package, which upgraded the Ell package in aim of solving the Hdd5 related problems in \cite{GarsiaXin}. The package can be downloaded from the web
{\it
http://www.combinatorics.net.cn/homepage/xin/maple/ell2.rar}. Using this package we obtain the following results.

\begin{thm} The Poincar\'e series  $\mathcal{P}(\mathcal{I}_{3,d},t)$  for  the algebra  of  invariants of ternary form of orders $1,2,3,4,5,6$ is given by
\begin{gather*}
\mathcal{P}(\mathcal{I}_{3,1},t)=1,
\mathcal{P}(\mathcal{I}_{3,2},t)=\frac{1}{1-t^3},
\mathcal{P}(\mathcal{I}_{3,3},3)={\frac {1}{ \left( 1-{t}^{4} \right)  \left( 1-{t}^{6} \right) }},\\
\mathcal{P}(\mathcal{I}_{3,4},3)=\frac{b_{3,4}(t)} {\left({1-{t}^{3}}\right)\left({1-{t}^{6}}\right)\left({1-{t}^{9}}\right)\left({1-{t}^{12}}\right)\left({1-{t}^{15}}\right)\left({1-{t}^{18}}\right)\left({1-{t}^{27}}\right)}
,\\
 b_{3,4}(t)=1+{t}^{9}+{t}^{12}+{t}^{15}+2{t}^{18}+3{t}^{21}+2{t}^{24}+3{t}^{27}+4{t}^{30}+3{t}^{33}+4{t}^{36}+4{t}^{39}\\
+3{t}^{42}+4{t}^{45}+3{t}^{48}+2{t}^{51}+3{t}^{54}+2{t}^{57}+{t}^{60}+{t}^{63}+{t}^{66}+{t}^{75}
\end{gather*}
\begin{multline*}
 \mathcal{P}(\mathcal{I}_{3,5},3)= \frac{b_{3,5}}{ \left({1-{t}^{6}}\right)\left({1-{t}^{9}}\right)\left({1-{t}^{12}}\right)\left({1-{t}^{15}}\right)
  \left({1-{t}^{18}}\right)^{2}\left({1-{t}^{21}}\right)\left({1-{t}^{24}}\right)\left({1-{t}^{27}}\right)}\\
  \times\frac1{\left({1-{t}^{30}}\right)\left({1-{t}^{33}}\right)\left({1-{t}^{36}}\right)\left({1-{t}^{48}}\right)}
 \end{multline*}
 \begin{multline*}
  \mathcal{P}(\mathcal{I}_{3,6},3)= \frac{b_{3,6}}{\left({1-{t}^{3}}\right)\left({1-{t}^{4}}\right)
\left({1-{t}^{6}}\right)^{2}\left({1-{t}^{7}}\right)^{2}
\left({1-{t}^{8}}\right)\left({1-{t}^{9}}\right)^{2}\left({1-{t}^{10}}
\right)\left({1-{t}^{11}}\right)}\\
\times \frac1{\left({1-{t}^{12}}\right)\left({1-{t}^{13}}\right)\left({1-{t}^{15}}\right)^{2}\left({1-{t}^{16}}\right)\left({1-{t}^{17}}\right)\left({1-{t}^{19}}\right)
\left({1-{t}^{20}}\right)\left({1-{t}^{25}}\right)},
\end{multline*}
where $b_{3,5}$ and $b_{3,6}$ are very huge so we give at the end of this section.
\end{thm}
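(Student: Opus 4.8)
The plan is to apply the previous theorem, which expresses $\mathcal{P}(\mathcal{I}_{n,d},t)$ as the constant term $\CT_q \mathcal{B}_n(\textbf{q})/\prod_{|\eta|\le d}(1-t\textbf{q}^{n\eta-d\rho})$, and then to carry out that constant term extraction explicitly for $n=3$ and $d=1,2,3,4,5,6$ using Xin's \textsf{Maple} package Ell2. First I would assemble the two ingredients for each case: the numerator $\mathcal{B}_3(\textbf{q})=1+q_1^3q_2^3+q_1^6/q_2^3-2q_1^3-q_1^6$, quoted in Section 2, and the denominator, which is the product of $\binom{d+2}{2}$ factors $1-t\,q_1^{3\eta_1-d}q_2^{3\eta_2-d}$ indexed by $\eta=(\eta_1,\eta_2)\in\mathbb{N}^2$ with $\eta_1+\eta_2\le d$ (here $n=3$, $\rho=(1,1)$). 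For $d=1$ this is a single factor and the constant term is trivially $1$; for $d=2$ one gets $6$ factors; and the count grows quadratically, so that by $d=6$ there are $28$ factors and the rational function is genuinely large.

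The core of the argument is the iterated constant-term computation. I would take the constant term in $q_1$ and then in $q_2$ (the order is immaterial by Proposition~\ref{p-1.1}, but one order may be computationally cheaper), using the partial-fraction / elimination strategy underlying MacMahon's Omega operator as implemented in Ell2: expand the integrand as a sum of simple fractions in the variable being eliminated, discard the parts with poles outside (or inside, depending on the chosen convention for $|q_i|=1$) the unit circle, and collect the surviving terms. Since each denominator factor is linear in $t$, the whole expression is a rational function of $t$ whose denominator divides a product of cyclotomic-type factors $1-t^{m}$; the algorithm returns it already in the reduced form displayed in the statement, with the listed denominators and the numerator polynomials $b_{3,d}(t)$. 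For $d=3,4$ I would also double-check against the known results of Dixmier--Lazard and Shioda obtained via Molien's formula and the residue theorem (cited in the introduction), and for $d=1,2$ against the elementary facts that $\mathcal{I}_{3,1}=\mathbb{C}$ and $\mathcal{I}_{3,2}=\mathbb{C}[\det]$ with $\det$ of degree $3$.

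The main obstacle is purely computational rather than conceptual: the intermediate expressions produced during the elimination of $q_1$ (and then $q_2$) blow up enormously — the number of partial-fraction terms is governed by the least common multiples of the exponents appearing in the $28$ denominator factors for $d=6$, so the naive approach is infeasible and one must rely on the optimizations in Ell2 (grouping factors, working with the numerator $\mathcal{B}_3$ symbolically, and truncating judiciously). I expect the write-up to consist mostly of setting up the data $\mathcal{B}_3$ and the denominator product correctly, invoking the package, and then reporting and sanity-checking the output; the honest statement of the proof is that the result is obtained by this symbolic computation, with the $d\le 4$ cases serving as independent verification that the implementation is correct. The huge numerators $b_{3,5}$ and $b_{3,6}$ are simply recorded at the end of the section as the output of this procedure.
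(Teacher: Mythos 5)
Your proposal matches the paper's approach exactly: the theorem is established by assembling the constant-term expression $\CT_q\,\mathcal{B}_3(\textit{\textbf{q}})/\prod_{|\eta|\le d}(1-t\textit{\textbf{q}}^{3\eta-d\rho})$ and evaluating it with the Ell2 package via iterated partial-fraction elimination, with the larger cases $(3,5)$ and $(3,6)$ requiring the same splitting and rescaling tricks you allude to for managing the computational blow-up. The paper offers no further proof beyond reporting the software output, so your write-up is, if anything, more explicit about the setup and the sanity checks.
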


\begin{thm} The Poincar\'e series  $\mathcal{P}(\mathcal{I}_{4,d},t)$  for  the algebra  of  invariants of quaternary form of orders $1,2,3$ is given by
\begin{gather*}
\mathcal{P}(\mathcal{I}_{4,1},t)=1,\qquad
\mathcal{P}(\mathcal{I}_{4,2},t)=\frac{1}{1-t^4},\\
\mathcal{P}(\mathcal{I}_{4,3},3)={\frac {1+{t}^{100}}{ \left( 1-{t}^{8} \right)  \left( 1-{t}^{16}
 \right)  \left( 1-{t}^{24} \right)  \left( 1-{t}^{32} \right)
 \left( 1-{t}^{40} \right) }},
\end{gather*}
\end{thm}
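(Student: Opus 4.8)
The plan is to invoke the constant term formula of the preceding theorem together with the explicit expression for $\mathcal{B}_4(\textit{\textbf{q}})$ established above, and then to carry out the constant term extraction in the three variables $q_1,q_2,q_3$ by MacMahon partition analysis, as implemented in Xin's \textsf{Ell2} package. For $n=4$ we have $\rho=(1,1,1)$, so the quantity to compute is
$$
\mathcal{P}(\mathcal{I}_{4,d},t)=\CT_{q}\ \frac{\mathcal{B}_4(q_1,q_2,q_3)}{\displaystyle\prod_{|\eta|\le d}\bigl(1-t\,q_1^{4\eta_1-d}\,q_2^{4\eta_2-d}\,q_3^{4\eta_3-d}\bigr)},
$$
where $\eta$ runs over the lattice points of $\mathbb{N}^3$ with $|\eta|\le d$: four denominator factors when $d=1$, ten when $d=2$, and twenty when $d=3$.

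First I would view each Poincar\'e series as a power series in $t$ whose coefficients are Laurent polynomials in $q_1,q_2,q_3$, by expanding every factor $1/(1-t\,\textit{\textbf{q}}^{\,v})$ as a geometric series in $t$; this is valid on $|q_i|=1$ for $|t|$ small and fits the hypothesis of Proposition \ref{p-1.1}. Then I would extract the constant term one variable at a time, say in $q_3$, then $q_2$, then $q_1$, by iterated partial fraction decompositions in the working variable, inside a fixed field of iterated Laurent series so that for each factor it is unambiguous which of its poles lie "inside". This is precisely the elimination procedure underlying MacMahon's Omega operator and the \textsf{Ell2} routines. Summing the surviving residues after each elimination and simplifying yields, for $d=1$, the series $1$ (as it must, since $SL_4$ acts transitively on the nonzero linear forms, so the only invariants are constants), for $d=2$ the series $1/(1-t^4)$ (the invariant ring being generated by the degree-$4$ discriminant), and for $d=3$ the stated rational function.

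The main obstacle is the $d=3$ case: with twenty denominator factors whose exponent vectors are spread over a sizable region of $\mathbb{Z}^3$ and a numerator $\mathcal{B}_4$ with twenty-two terms, a naive partial fraction expansion produces an unmanageably large intermediate sum, so feasibility relies on the efficiency features of the \textsf{Ell2} package: reusing partial fraction data, discarding terms that cannot contribute to a constant term, and simplifying aggressively between eliminations. A second, more mundane difficulty is bookkeeping, since the ordering of the variables and the "small versus large" convention must be kept consistent across all three eliminations; an inconsistent choice silently returns a different constant term.

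Finally, as an independent check I would compare the $d=3$ output with the classical description of the invariants of the cubic surface going back to Salmon and Clebsch: the invariant ring is a free module of rank two over the polynomial ring generated by invariants of degrees $8,16,24,32,40$, with module basis $1$ and a further invariant of degree $100$ whose square lies in that polynomial ring, so its Hilbert series equals $(1+t^{100})/\bigl((1-t^8)(1-t^{16})(1-t^{24})(1-t^{32})(1-t^{40})\bigr)$, matching the formula above. Together with the elementary $d=1$ and $d=2$ answers, this gives a complete cross-validation of the computation.
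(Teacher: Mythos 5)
Your proposal follows essentially the same route as the paper: both reduce the problem to the constant term $\CT_q\,\mathcal{B}_4(\textit{\textbf{q}})/\prod_{|\eta|\le 3}(1-t\textit{\textbf{q}}^{4\eta-3\rho})$ and evaluate it by iterated partial-fraction elimination in the field of iterated Laurent series via the \textsf{Ell2} package, the authors handling the size of the $(4,3)$ case with the same kind of preliminary partial-fraction splitting and exponent-rescaling tricks you anticipate. Your closing cross-check against the classical Salmon--Clebsch description of the cubic-surface invariants (degrees $8,16,24,32,40$ plus the degree-$100$ skew invariant) is a worthwhile addition not present in the paper, but it does not change the substance of the argument.
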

Also we calculated   the Poincar\'e series  $\mathcal{P}(\mathcal{I}_{5,d},t)$  for  the algebras of  invariants of quinary form and have got the following results:
\begin{gather*}
\mathcal{P}(\mathcal{I}_{5,1},t)=1,\qquad
\mathcal{P}(\mathcal{I}_{5,2},t)=\frac{1}{1-t^5}.%
\end{gather*}

The command we use in the package Ell2 is $E\_OeqW(Q,vxa,va)$ that computes the constant term of $Q$ in variables of $va$, in which the input $Q$ is an
\emph{Elliott-rational function} written in the form
$$ \frac{L}{(1-M_1)(1-M_2)\cdots (1-M_k)},$$
where $L$ is a Laurent polynomial and $M_i$ are monomials; $vxa$ is a list of variables that defines a field of iterated Laurent series clarifying the series expansion of each $(1-M_i)^{-1}$; and $va$ is a list of variables that are to be eliminated. The output is a (big) sum of Elliott-rational functions. One can then combine them to a single rational function if needed. It is worth noting that software such as Latte \cite{Latte} are developed for $L$ being a monomial, in which case the constant term is related to certain counting in a rational convex polytope.

To calculate $\mathcal{P}(\mathcal{I}_{n,d},t)$, the input $Q$ is clear, the $vxa$ is taken to be $[t,q_1,q_2,\dots,q_n]$, and $va$ is clearly $[q_1,\dots,q_n]$. Let us start with the $n=2$ case by the well-known formula
\begin{align}
  \mathcal{P}(\mathcal{I}_{2,d},t)= \CT_q \frac{1-q^2}{\prod_{0\le j\le d } (1-q^{2j-d})}.
\end{align}
We explain in detail how we compute the $d=3$ case. The problem is to compute the constant term of 
$$Q=\frac{ 1-{q}^{2}}{
 \left( 1-{\frac {t}{{q}^{3}}} \right) \left( 1-{\frac {t}{q}}
 \right) \left( 1-qt \right) \left( 1-{q}^{3}t \right)} .$$
Letting $vxa=[t,q]$ means that we are working in the field of iterated Laurent series defined by $0<t<\!\!<q<\!\!<1$ (where $<\!\!<$ means ``much smaller"), so that $M=q^{2j-d}t$ is small for all $j$ and we have the correct series expansion:
$$ \frac{1}{1-M} = \sum_{k\ge 0} M^k =\sum_{k\ge 0} (q^{2j-d}t)^k.$$ 
Note that if $M $ were large then we should have the alternative series expansion:
$$ \frac{1}{1-M} = \frac{1}{-M(1-1/M)}=-\sum_{k\ge 0} M^{-k-1}.$$

To extract the constant term of $Q$, we first compute the partial fraction decomposition with respect to $q$:
$$ Q= \frac{p_{-3}(q)}{q^3-t}+\frac{p_{-1}}{q-t}+\frac{p_{1}}{1-qt}+\frac{p_{3}}{1-q^3t},$$
where $p_{-3}(q)$ and $p_3(q)$ are polynomials of degree less than $3$, and $p_{-1}$ and $p_1$ are constant, with respect to $q$. Now one can check that taking constant term gives 
$\CT_q Q= p_1+p_3(0).$ To be precise, we have 
$$ p_1=\frac{1}{(1-t^2)(1-t^4)}, p_3=-\frac{t^2+qt+q^2}{(1-t^2)(1-t^4)}, \CT_q Q=p_1+p_3(0)=\frac{1}{1-q^4}.$$
From this example, we see that only part of the partial fraction decomposition is need. Indeed, this happens most of the time. The Ell package adopted an effective way to compute only the necessary partial fraction, and then extract the constant term.

For the binary case, the package computes $\mathcal{P}(\mathcal{I}_{2,d},t)$ quickly. The computation time spend on our personal laptop for some $d$  is listed in the following table, where the second row is measured by $100$ seconds.

\medskip
\begin{tabular}{|l|r|r|r|r|r|r|r|r||r|r|r|r|r|r|r|}
  \hline
  $d$ &  48 & 50 & 52 & 54 & 56 & 58 & 60 & 62   &      45 & 47 & 49 & 51 & 53 &55 &57\\
  $10^2$s  & 5.5 & 7.1 & 9.5 & 12 & 14 & 19 & 21 & 28 &  17 & 24 & 31 & 37 &52 & 62 & 76 \\
  \hline
\end{tabular}

\medskip
For example, the computation for $d=62$ spends about 2800 seconds. From the table we can see that  i) the time increased from $d$ to $d+2$ is not fast, so computation for larger $d$ should be possible; ii) odd $d$ case is much harder than even $d$ case, in fact our computation takes the constant term of $Q\Big|_{q= q^{1/2}}$ instead since $Q$ is an even function in $q$.

When $n$ becomes larger, the computational complexity increases very fast. See \cite{XinIterate}. Our computer quickly delivers $\mathcal{P}(\mathcal{I}_{3,d},t)$ for $d=1,2,3,4$, $\mathcal{P}(\mathcal{I}_{4,d},t)$ for $d=1,2$, and $\mathcal{P}(\mathcal{I}_{5,1},t)$. Only the case $(n,d)=(3,4)$ takes about 30 seconds. The cases $(n,d)=(3,5),(3,6),(4,3),(5,2)$ seems to take too much time or too much memory, but the flexibility of the package allows us to calculate these cases by the following two simple tricks.

\begin{enumerate}
  \item We can split $Q$ as $Q=Q_1+\cdots +Q_k$ using partial fraction decompositions to part of the rational function and then evaluate the constant term of each $Q_i$. For example, if $Q$ can be written as
      $$Q= Q'\cdot \frac{q_1^{(k-1)r}}{(1-q_1^rM_1)\cdots (1-q_1^r M_k)},$$
      where $M_i$ are independent of $q_1$, then the simple partial fraction decomposition of $Q/Q'$ will give rise to $Q=Q_1+\cdots+Q_k$ with each $Q_i$ simple.

  \item It is clear that for any positive integer $r$
  $$ \CT_{q} Q(t;q_1,q_2,\dots,q_n) = \CT_{q} Q(t;q_1^r,q_2^r,\dots,q_n^r),$$
  but computing the left constant term clearly save memories.
\end{enumerate}

We only need to apply trick (1) at the beginning, and might be able to apply trick (2) thereafter.
The readers are welcome to try to compute these cases by themselves. We only report here that the $(n,d)=(3,5)$ case took our computer about $6$ hours, but the $(n,d)=(3,6)$ case took only about 4 hours because trick 2 applies at the beginning.

We conclude this section by giving the following data as we promised.
\begin{align*}
  &b_{3,5}=1+{t}^{6}+16{t}^{12}+21{t}^{15}+155{t}^{18}+340{t}^{21}+1249{t}^{24}+2749{t}^{27}+7338{t}^{30}+15172{t}^{33}\\
  &+33561{t}^{36}+63846{t}^{39}+124171{t}^{42}+219049{t}^{45}+386883{t}^{48}+637632{t}^{51}+1043255{t}^{54}\\
  &+1620343{t}^{57}+2488797{t}^{60}+3668275{t}^{63}+5339363{t}^{66}+7513136{t}^{69}+10436320{t}^{72}\\
  &+14086698{t}^{75}+18773622{t}^{78}+24404824{t}^{81}+31336310{t}^{84}+39358417{t}^{87}+48852312{t}^{90}\\
  &+59442770{t}^{93}+71512920{t}^{96}+84482733{t}^{99}+98724670{t}^{102}+113437995{t}^{105}\\
  &+128987065{t}^{108}+144362217{t}^{111}+159945320{t}^{114}+174563826{t}^{117}+188657091{t}^{120}\\
  &+200960031{t}^{123}+212020799{t}^{126}+220567457{t}^{129}+227300866{t}^{132}+231023737{t}^{135}\\
  &+232616715{t}^{138}+231023737{t}^{141}+227300866{t}^{144}+220567457{t}^{147}+212020799{t}^{150}\\
  &+200960031{t}^{153}+188657091{t}^{156}+174563826{t}^{159}+159945320{t}^{162}+144362217{t}^{165}\\
  &+128987065{t}^{168}+113437995{t}^{171}+98724670{t}^{174}+84482733{t}^{177}+71512920{t}^{180}\\
  &+59442770{t}^{183}+48852312{t}^{186}+39358417{t}^{189}+31336310{t}^{192}+24404824{t}^{195}\\
  &+18773622{t}^{198}+14086698{t}^{201}+10436320{t}^{204}+7513136{t}^{207}+5339363{t}^{210}+3668275{t}^{213}\\
  &+2488797{t}^{216}+1620343{t}^{219}+1043255{t}^{222}+637632{t}^{225}+386883{t}^{228}+219049{t}^{231}\\
  &+124171{t}^{234}+63846{t}^{237}+33561{t}^{240}+15172{t}^{243}+7338{t}^{246}+2749{t}^{249}+1249{t}^{252}\\
  &+340{t}^{255}+155{t}^{258}+21{t}^{261}+16{t}^{264}+{t}^{270}+{t}^{276}
\end{align*}
\begin{align*}
  &b_{3,6}= 1+{t}^{5}+{t}^{6}+2{t}^{7}+5{t}^{8}+10{t}^{9}+19{t}^{10}+33{t}^{11}+67{t}^{12}+119{t}^{13}+227{t}^{14}+420{t}^{15}
  +759{t}^{16}\\
  &+1365{t}^{17}+2414{t}^{18}+4173{t}^{19}+7133{t}^{20}+11954{t}^{21}+19723{t}^{22}+32032{t}^{23}+51250{t}^{24}\\
  &+80769{t}^{25}+125568{t}^{26}+192552{t}^{27}+291474{t}^{28}+435769{t}^{29}+643761{t}^{30}+940068{t}^{31}\\
  &+1357776{t}^{32}+1940187{t}^{33}+2744165{t}^{34}+3843089{t}^{35}+5330914{t}^{36}+7326794{t}^{37}\\
  &+9980701{t}^{38}+13479012{t}^{39}+18052285{t}^{40}+23982388{t}^{41}+31611650{t}^{42}+41351888{t}^{43}\\
  &+53695220{t}^{44}+69223771{t}^{45}+88622077{t}^{46}+112687682{t}^{47}+142342980{t}^{48}\\
  &+178646187{t}^{49}+222802513{t}^{50}+276172273{t}^{51}+340280998{t}^{52}+416823728{t}^{53}\\
  &+507669990{t}^{54}+614864095{t}^{55}+740623379{t}^{56}+887330446{t}^{57}+1057524553{t}^{58}\\
  &+1253884087{t}^{59}+1479207593{t}^{60}+1736386956{t}^{61}+2028377410{t}^{62}+2358158471{t}^{63}\\
  &+2728694813{t}^{64}+3142885938{t}^{65}+3603516058{t}^{66}+4113196902{t}^{67}+4674308932{t}^{68}\\
  &+5288936987{t}^{69}+5958810043{t}^{70}+6685233241{t}^{71}+7469027927{t}^{72}+8310469049{t}^{73}\\
  &+9209229814{t}^{74}+10164327764{t}^{75}+11174083807{t}^{76}+12236081611{t}^{77}+13347144073{t}^{78}\\
  &+14503315410{t}^{79}+15699858781{t}^{80}+16931262417{t}^{81}+18191266475{t}^{82}+19472893975{t}^{83}\\
  &+20768505350{t}^{84}+22069859901{t}^{85}+23368194676{t}^{86}+24654311835{t}^{87}+25918683935{t}^{88}\\
  &+27151559407{t}^{89}+28343086653{t}^{90}+29483436265{t}^{91}+30562932672{t}^{92}+31572182881{t}^{93}\\
  &+32502211770{t}^{94}+33344584191{t}^{95}+34091532699{t}^{96}+34736068659{t}^{97}+35272088764{t}^{98}\\
  &+35694464859{t}^{99}+35999126532{t}^{100}+36183117766{t}^{101}+36244648368{t}^{102}+36183117766{t}^{103}\\
  &+35999126532{t}^{104}+35694464859{t}^{105}+35272088764{t}^{106}+34736068659{t}^{107}+34091532699{t}^{108}\\
  &+33344584191{t}^{109}+32502211770{t}^{110}+31572182881{t}^{111}+30562932672{t}^{112}+29483436265{t}^{113}\\
  &+28343086653{t}^{114}+27151559407{t}^{115}+25918683935{t}^{116}+24654311835{t}^{117}+23368194676{t}^{118}\\
  &+22069859901{t}^{119}+20768505350{t}^{120}+19472893975{t}^{121}+18191266475{t}^{122}+16931262417{t}^{123}\\
  &+15699858781{t}^{124}+14503315410{t}^{125}+13347144073{t}^{126}+12236081611{t}^{127}+11174083807{t}^{128}\\
  &+10164327764{t}^{129}+9209229814{t}^{130}+8310469049{t}^{131}+7469027927{t}^{132}+6685233241{t}^{133}\\
  &+5958810043{t}^{134}+5288936987{t}^{135}+4674308932{t}^{136}+4113196902{t}^{137}+3603516058{t}^{138}\\
  &+3142885938{t}^{139}+2728694813{t}^{140}+2358158471{t}^{141}+2028377410{t}^{142}+1736386956{t}^{143}\\
  &+1479207593{t}^{144}+1253884087{t}^{145}+1057524553{t}^{146}+887330446{t}^{147}+740623379{t}^{148}\\
  &+614864095{t}^{149}+507669990{t}^{150}+416823728{t}^{151}+340280998{t}^{152}+276172273{t}^{153}\\
  &+222802513{t}^{154}+178646187{t}^{155}+142342980{t}^{156}+112687682{t}^{157}+88622077{t}^{158}+69223771{t}^{159}\\
  &+53695220{t}^{160}+41351888{t}^{161}+31611650{t}^{162}+23982388{t}^{163}+18052285{t}^{164}+13479012{t}^{165}\\
  &+9980701{t}^{166}+7326794{t}^{167}+5330914{t}^{168}+3843089{t}^{169}+2744165{t}^{170}+1940187{t}^{171}\\
  &+1357776{t}^{172}+940068{t}^{173}+643761{t}^{174}+435769{t}^{175}+291474{t}^{176}+192552{t}^{177}+125568{t}^{178}\\
  &+80769{t}^{179}+51250{t}^{180}+32032{t}^{181}+19723{t}^{182}+11954{t}^{183}+7133{t}^{184}+4173{t}^{185}\\
  &+2414{t}^{186}+1365{t}^{187}+759{t}^{188}+420{t}^{189}+227{t}^{190}+119{t}^{191}+67{t}^{192}+33{t}^{193}\\
  &+19{t}^{194}+10{t}^{195}+5{t}^{196}+2{t}^{197}+{t}^{198}+{t}^{199}+{t}^{204}
\end{align*}


\end{document}